\documentclass[11pt]{amsart} 
\usepackage{amssymb,amsthm,amsmath,epsfig,latexsym}
\usepackage{graphicx}
\usepackage{calc,times}
\usepackage{geometry} 
\geometry{margin=1in} 
\begin{document}

\newcommand{\mmbox}[1]{\mbox{${#1}$}}
\newcommand{\proj}[1]{\mmbox{{\mathbb P}^{#1}}}
\newcommand{\Cr}{C^r(\Delta)}
\newcommand{\CR}{C^r(\hat\Delta)}
\newcommand{\affine}[1]{\mmbox{{\mathbb A}^{#1}}}
\newcommand{\Ann}[1]{\mmbox{{\rm Ann}({#1})}}
\newcommand{\caps}[3]{\mmbox{{#1}_{#2} \cap \ldots \cap {#1}_{#3}}}
\newcommand{\N}{{\mathbb N}}
\newcommand{\Z}{{\mathbb Z}}
\newcommand{\R}{{\mathbb R}}
\newcommand{\sat}{{\rm sat}}
\newcommand{\Tor}{\mathop{\rm Tor}\nolimits}
\newcommand{\Ext}{\mathop{\rm Ext}\nolimits}
\newcommand{\Hom}{\mathop{\rm Hom}\nolimits}
\newcommand{\im}{\mathop{\rm Im}\nolimits}
\newcommand{\rank}{\mathop{\rm rank}\nolimits}
\newcommand{\supp}{\mathop{\rm supp}\nolimits}
\newcommand{\arrow}[1]{\stackrel{#1}{\longrightarrow}}
\newcommand{\CB}{Cayley-Bacharach}
\newcommand{\coker}{\mathop{\rm coker}\nolimits}
\sloppy
\newtheorem{defn0}{Definition}[section]
\newtheorem{prop0}[defn0]{Proposition}
\newtheorem{quest0}[defn0]{Question}
\newtheorem{thm0}[defn0]{Theorem}
\newtheorem{lem0}[defn0]{Lemma}
\newtheorem{corollary0}[defn0]{Corollary}
\newtheorem{example0}[defn0]{Example}
\newtheorem{remark0}[defn0]{Remark}

\newenvironment{defn}{\begin{defn0}}{\end{defn0}}
\newenvironment{prop}{\begin{prop0}}{\end{prop0}}
\newenvironment{quest}{\begin{quest0}}{\end{quest0}}
\newenvironment{thm}{\begin{thm0}}{\end{thm0}}
\newenvironment{lem}{\begin{lem0}}{\end{lem0}}
\newenvironment{cor}{\begin{corollary0}}{\end{corollary0}}
\newenvironment{exm}{\begin{example0}\rm}{\end{example0}}
\newenvironment{rem}{\begin{remark0}\rm}{\end{remark0}}

\newcommand{\defref}[1]{Definition~\ref{#1}}
\newcommand{\propref}[1]{Proposition~\ref{#1}}
\newcommand{\thmref}[1]{Theorem~\ref{#1}}
\newcommand{\lemref}[1]{Lemma~\ref{#1}}
\newcommand{\corref}[1]{Corollary~\ref{#1}}
\newcommand{\exref}[1]{Example~\ref{#1}}
\newcommand{\secref}[1]{Section~\ref{#1}}
\newcommand{\remref}[1]{Remark~\ref{#1}}
\newcommand{\questref}[1]{Question~\ref{#1}}

\newcommand{\std}{Gr\"{o}bner}
\newcommand{\jq}{J_{Q}}


\title{Error-correction of linear codes via colon ideals}

\author{Benjamin Anzis and \c{S}tefan O. Toh\v{a}neanu}

\subjclass[2010]{Primary: 14G50; Secondary: 13D02, 94B35, 13D40, 11T71} \keywords{linear codes, minimum distance, saturation, colon ideals, MDS codes, Castelnuovo-Mumford regularity, free resolutions.\\ \indent The second author is the corresponding author.\\
\indent Authors' addresses: Department of Mathematics, University of Idaho, Moscow, ID 83844, anzi4123@vandals.uidaho.edu, tohaneanu@uidaho.edu.}

\begin{abstract}
\noindent We show that errors in data transmitted through linear codes can be
thought of as codewords of minimum weight of new linear codes. To determine
errors we can then use methods specific to finding such special codewords. One
of these methods consists of finding the primary decomposition of the
saturation of a certain homogeneous ideal. When good words (i.e.\ vectors with
a unique nearest neighbor) are error-corrected, the saturated ideal is just
the prime ideal of a point (so the primary decomposition is superfluously
determined); we show that this ideal can be computed by coloning the original
homogeneous ideal with a power of a certain variable. We then determine the
smallest such power for any linear code.
\end{abstract}
\maketitle

\section{Introduction}

Let $\mathcal C$ be an $[n,k,d]-$linear code with generating matrix (in canonical bases) $$G=\left(\begin{array}{cccc}a_{11}&a_{12}&\cdots&a_{1n}\\ a_{21}&a_{22}&\cdots&a_{2n}\\\vdots&\vdots& &\vdots\\ a_{k1}&a_{k2}&\cdots&a_{kn}\end{array}\right),$$ where $a_{ij}\in\mathbb K$, any field.

By this, one understands that $\mathcal C$ is the image of the injective linear map $$\phi:\mathbb K^k\stackrel{G}\longrightarrow \mathbb K^n.$$ $n$ is the {\em length} of $\mathcal C$, $k$ is the {\em dimension} of $\mathcal C$ and $d$ is the {\em minimum distance (or Hamming distance)}, the smallest number of non-zero entries in a non-zero codeword (i.e.\ non-zero element of $\mathcal C$). For background on linear codes we recommend \cite{hp1}.

Also, for any vector $w\in\mathbb K^n$, the {\em weight} of $w$, denoted $wt(w)$, is the number of non-zero entries in $w$.

\medskip

The most commonly used method for decoding a received word $w\in\mathbb K^n$ is to find the codeword $v\in\mathcal C$ which minimizes $wt(w-v)$ (i.e.\ $v$ is the {\em nearest neighbor} of $w$), and decode $w$ to $\phi^{-1}(v)$. Of course, a $w\notin \mathcal C$ might have more than one nearest neighbor. In this case the nearest neighbor algorithm fails. Fortunately, under certain conditions (see Proposition 2.1 in \cite[Chapter 9]{clo}), error detection and correction are guaranteed to succeed:
\begin{itemize}
  \item any $d-1$ errors in a received vector can be detected, meaning that if there is a $v\in\mathcal C$ with $0<wt(w-v)\leq d-1$, then $w\notin \mathcal C$, and
  \item if $d\geq 2t+1$, any $t$ errors can be corrected, meaning that there is a unique $v\in\mathcal C$ with $wt(w-v)\leq t$.
\end{itemize}

A vector has at most $m$ non-zero entries if and only if all products of $m+1$ distinct entries are zero. This simple result was first exploited in the context of coding theory by De Boer and Pellikaan (\cite{dp}). Furthermore, one can translate the syndrome decoding algorithm, a widely used algorithm based on the method expressed above, into the language of varieties (called {\em syndrome varieties}) and use computational algebraic techniques (such as Gr\"{o}bner bases) to find the errors and the nearest neighbors of a received word: see \cite{dp} (for cyclic codes) and \cite{BuPe} (for a general approach with great applications to error-correcting words received through MDS codes).

We are particularly interested in this approach because of the use of commutative/homological algebra. Even though in the two papers mentioned above the authors end up using Gr\"{o}bner bases for their calculations, fundamental concepts and techniques lie at the foundations of their work (e.g.\ the height of an ideal (\cite{dp}) and the classical Eagon-Northcott complex (\cite{BuPe})). In these notes we take an even more theoretical approach (no Gr\"{o}bner bases analysis), as we want to understand what it means from an abstract point of view to error-correct a word received through any linear code. Furthermore, in order to improve our symbolic computations, and by means of standard coding theory techniques (such as puncturing a code), we end up showing that a certain class of ideals generated by products of linear forms, which define scheme-theoretically the projective codewords of minimum weight, have linear graded minimal free resolutions; the result presented (Theorem \ref{regularity}) is in line with the theme of the landmark paper of Eisenbud and Goto on modules with linear free resolutions, \cite{EiGo}.

Another argument supporting a commutative algebraic approach to linear codes is the fact that the nice homological properties of defining ideals of star configurations can be obtained immediately from the theoretical results concerning MDS codes (see \cite[Remark 2.13]{GeHaMi}).

In conclusion, the benefits of this approach lie in answering questions in commutative algebra and having a broad perspective on linear codes from this direction, rather than in improving/creating better practical algorithms for decoding and error-correcting linear codes, or for computing their minimum distance.

\section{Error-correction via colon ideals}

The basic idea of our strategy to error-correct any received word is the following:

\begin{enumerate}
  \item To the generating matrix of our linear code, augment the received word as a new row. This new matrix is the generating matrix for a new linear code, and under certain conditions (see the two bullets in the Introduction) errors in the transmission become codewords of minimum weight in this new linear code (see Corollary \ref{error_minweight}).
  \item Use techniques from \cite{t}, that consist of saturation of ideals and primary decomposition, to determine these special codewords.
  \item When good words are received (meaning vectors with unique nearest neighbors), both of these techniques are incorporated into one simple operation: colon a certain ideal by a power of a variable (see Lemma \ref{lem_sat}).
\end{enumerate}

Below we explore each of the steps in the strategy.

\subsection{Errors as codewords of minimum weight.} Let $\mathcal C$ be an $[n,k,d]-$linear code with generating matrix $G$ as in the introduction. Suppose that a word $w=(w_1,w_2,\ldots,w_n)\in\mathbb K^n$ is received. Create a new linear code $\mathcal C^w$ with generating matrix $$G^w:=\left(\begin{array}{cccc}a_{11}&a_{12}&\cdots&a_{1n}\\ a_{21}&a_{22}&\cdots&a_{2n}\\\vdots&\vdots& &\vdots\\ a_{k1}&a_{k2}&\cdots&a_{kn}\\w_1&w_2&\cdots&w_n\end{array}\right).$$ Observe that $G^w$ is created from the generator matrix $G$ of $\mathcal C$ by augmenting the extra row $w$; a code with such a generating matrix is called {\em augmented code}.

Let $d_w:=\min\{wt(\epsilon)| \epsilon\in\mathbb K^n \mbox{ with }w-\epsilon\in\mathcal C\}$.

Two codewords are called {\em projectively equivalent} if they differ by multiplication by a non-zero scalar, and such an equivalence class, denoted with square brackets, is called a {\em projective codeword}. For any linear code $\mathcal D$, denote with $\mathbb P\mathcal D(u)$ the set of projective codewords of weight $u$ in $\mathcal D$.

The next result is somewhat folklore in coding theory (it seems that it appears in \cite{Ba}), but for the sake of completeness we present it in the form we will use further in the article, with a complete simple proof.

\begin{thm}\label{error} Let $w\notin \mathcal C$. Then, the nearest neighbors of $w$ in $\mathcal C$ (i.e.\ $v\in\mathcal C$ such that $wt(w-v)$ is minimized) are in one-to-one correspondence with the projective codewords of weight $d_w$ in $\mathcal C^w$ but not in $\mathcal C$.
\end{thm}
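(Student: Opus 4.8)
The plan is to exhibit the one-to-one correspondence explicitly and check it is well-defined in both directions. Given a nearest neighbor $v\in\mathcal C$ of $w$, set $\epsilon:=w-v$. Since $w\notin\mathcal C$ we have $\epsilon\neq 0$, and $wt(\epsilon)=d_w$ by the very definition of $d_w$ together with the assumption that $v$ is a nearest neighbor. Moreover $\epsilon = w - v$ is visibly a $\mathbb K$-linear combination of the rows of $G^w$ (take coefficient $1$ on the row $w$, and minus the coordinates of $\phi^{-1}(v)$ on the rows of $G$), so $\epsilon\in\mathcal C^w$; and $\epsilon\notin\mathcal C$, for otherwise $w=v+\epsilon\in\mathcal C$, contradicting the hypothesis. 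Thus $\epsilon$ is a codeword of $\mathcal C^w$ of weight $d_w$ lying outside $\mathcal C$, and I would send $v$ to the projective class $[\epsilon]$.

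Conversely, suppose $[\epsilon]\in\mathbb P\mathcal C^w(d_w)$ with $\epsilon\notin\mathcal C$. Being in $\mathcal C^w$, $\epsilon$ is a combination of the rows of $G^w$; the coefficient $\lambda$ on the last row $w$ must be nonzero, since otherwise $\epsilon$ would be a combination of the rows of $G$ alone, i.e.\ $\epsilon\in\mathcal C$. Replacing $\epsilon$ by $\lambda^{-1}\epsilon$ (same projective class) we may assume that coefficient is $1$, so $\epsilon = w - v$ for a unique $v\in\mathcal C$; I send $[\epsilon]$ to this $v$. Note $v$ is independent of the chosen representative: any other representative $\mu\epsilon$ with $\mu\neq 0$ has last-row coefficient $\mu$, which after normalization returns the same $v$. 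Since $wt(w-v)=wt(\epsilon)=d_w=\min\{wt(w-v')\mid v'\in\mathcal C\}$, the codeword $v$ is a nearest neighbor of $w$.

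Finally I would verify the two assignments are mutually inverse: starting from $v$, we form $\epsilon=w-v$, and the normalized representative of $[\epsilon]$ with last coefficient $1$ is $\epsilon$ itself, which maps back to $w-\epsilon=v$; starting from $[\epsilon]$ with normalized representative $\epsilon=w-v$, we get $v$, and then $w-v=\epsilon$ lies in $[\epsilon]$. Hence the correspondence is a bijection. The only point requiring a moment's care — the ``main obstacle,'' though it is mild — is the bookkeeping that the map on projective classes is well-defined: one must check that the passage ``normalize the last coordinate's coefficient to $1$'' produces a representative independent of scaling and genuinely of the form $w-v$ with $v\in\mathcal C$, and that the weight condition $wt(\epsilon)=d_w$ is exactly what pins $v$ down as a \emph{nearest} neighbor rather than merely some neighbor. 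Everything else is immediate from the definitions of $\mathcal C^w$, $d_w$, and projective equivalence.
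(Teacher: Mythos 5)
Your proposal is correct and follows essentially the same route as the paper: both use the map $v\mapsto[w-v]$, with $w\notin\mathcal C$ ruling out the degenerate cases, the only cosmetic difference being that you verify bijectivity by constructing an explicit inverse (normalizing the coefficient on the row $w$ to $1$) where the paper checks injectivity and surjectivity of the same map directly.
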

\begin{proof} First observe that $w\notin\mathcal C$ is equivalent to $d_w\geq 1$.

Consider the function $$\Phi: \{\mbox{nearest neighbors of }w\mbox{ in }\mathcal C\}\rightarrow \mathbb P\mathcal C^w(d_w) - \mathbb P\mathcal C(d_w),$$ given by $\Phi(v)=[w-v]$.

$\bullet$ {\em $\Phi$ is well-defined:} Let $v$ be a nearest neighbor of $w$ in $\mathcal C$. Then $v$ minimizes $wt(w-v)$, and so $wt(w-v) = d_w$. It is obvious that $w-v\in\mathcal C^w$, as it is a linear combination of the rows of $G^w$. If $[w-v]=[v']$ with $v'\in\mathcal C$, then $w-v=\mu v',$ for some $\mu\in\mathbb K-\{0\}$, and hence $w=v+\mu v'\in\mathcal C$, a contradiction.

$\bullet$ {\em $\Phi$ is injective:} If $\Phi(v_1)=\Phi(v_2)$, then $[w-v_1]=[w-v_2]$. Hence $w-v_1=\mu(w-v_2)$, for some $\mu\neq 0$ in $\mathbb K$. If $\mu=1$, then obviously $v_1=v_2$. Otherwise, we have $$w=\frac{1}{\mu-1}(\mu v_2-v_1) \in \mathcal C,$$ a contradiction.

$\bullet$ {\em $\Phi$ is surjective:} Let $\epsilon\in \mathcal C^w-\mathcal C$ with $wt(\epsilon)=d_w$. We have that $$\epsilon=\lambda w+v,$$ for some $v\in\mathcal C$ and $\lambda\neq 0$ (otherwise, $\epsilon\in \mathcal C$). Since $wt(\frac{1}{\lambda}\epsilon)=wt(\epsilon)=d_w$, then $v':=-\frac{1}{\lambda}v\in\mathcal C$ is a nearest neighbor of $w$ since $wt(w-v')=d_w$, the minimum possible. Obviously $$\Phi(v')=[w-v']=[\frac{1}{\lambda}\epsilon]=[\epsilon],$$ and the proof is complete. \end{proof}

\begin{rem} \label{error_rem}
It should be noted that Theorem \ref{error} can be ``extended'' to the situation in which $w\in \mathcal C$. In this instance $\mathcal C^w=\mathcal C$, and $d_w=0$. Since $w \in \mathcal C$, it is its own nearest neighbor. This corresponds to the only codeword in $\mathcal C=\mathcal C^w$ of weight equal to $d_w=0$, namely the zero vector, which can be written as as $w-w$.
\end{rem}

Theorem \ref{error} is particularly useful when $w\notin\mathcal C$ is a received word such that there exist $v\in\mathcal C$ with $wt(w-v)\leq d-1$. This is the situation in the first bullet of the cited result presented in the Introduction.

\begin{cor}\label{error_minweight} With respect to the notation used previously, if $d_w\leq d-1$, then $\mathcal C^w$ is an $[n,k+1,d_w]-$linear code, and therefore the nearest neighbors of $w$ in $\mathcal C$ (hence the errors) are in one-to-one correspondence with the projective codewords of minimum weight of $C^w$.
\end{cor}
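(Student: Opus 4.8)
The plan is to read off the three parameters $[n,k+1,d_w]$ of $\mathcal C^w$ straight from the definitions and then to feed the distance computation into \thmref{error}. The length of $\mathcal C^w$ is $n$ by construction. The first thing I would record is that the hypothesis $d_w\leq d-1$ already forces $w\notin\mathcal C$: indeed, by the first line of the proof of \thmref{error}, $w\notin\mathcal C$ is equivalent to $d_w\geq 1$ (if $w\in\mathcal C$ then $\epsilon=0$ is allowed, so $d_w=0$ and moreover $\mathcal C^w=\mathcal C$, which would contradict the claimed dimension). Hence in fact $1\leq d_w\leq d-1$, and in particular \thmref{error} is applicable.

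Next I would compute the minimum distance of $\mathcal C^w$ as $d_w$ by a two-sided estimate. For the bound $\leq d_w$, choose $\epsilon$ with $wt(\epsilon)=d_w$ and $w-\epsilon\in\mathcal C$, as allowed by the definition of $d_w$; then $\epsilon=1\cdot w+(\epsilon-w)$ is a linear combination of the rows of $G^w$, hence $\epsilon\in\mathcal C^w$, and it is nonzero since $d_w\geq 1$. For the bound $\geq d_w$, take any nonzero $\epsilon\in\mathcal C^w$ and write $\epsilon=\lambda w+v$ with $v\in\mathcal C$ and $\lambda\in\mathbb K$. If $\lambda=0$ then $\epsilon=v$ is a nonzero codeword of $\mathcal C$, so $wt(\epsilon)\geq d> d-1\geq d_w$. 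If $\lambda\neq 0$, then $\tfrac1\lambda\epsilon=w-(-\tfrac1\lambda v)$ with $-\tfrac1\lambda v\in\mathcal C$, so the definition of $d_w$ gives $wt(\epsilon)=wt(\tfrac1\lambda\epsilon)\geq d_w$, using that weight is invariant under scaling by a nonzero scalar. Thus the minimum distance of $\mathcal C^w$ is exactly $d_w$. The dimension then follows: the weight-$d_w$ codeword just produced has weight $<d$, so it is not in $\mathcal C$, whence $\mathcal C\subsetneq\mathcal C^w$ and $\dim\mathcal C^w\geq k+1$; since $G^w$ has $k+1$ rows, $\dim\mathcal C^w=k+1$ (and those rows are independent). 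So $\mathcal C^w$ is an $[n,k+1,d_w]$-linear code.

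For the final assertion, I would simply invoke \thmref{error}, which is legitimate since $w\notin\mathcal C$: it puts the nearest neighbors of $w$ in $\mathcal C$ in one-to-one correspondence with $\mathbb P\mathcal C^w(d_w)-\mathbb P\mathcal C(d_w)$. Because $1\leq d_w\leq d-1$, the code $\mathcal C$ has no nonzero codeword of weight $d_w$, so $\mathbb P\mathcal C(d_w)=\emptyset$ and this set is just $\mathbb P\mathcal C^w(d_w)$; and since $d_w$ is the minimum distance of $\mathcal C^w$, these are precisely the projective codewords of minimum weight of $\mathcal C^w$. I do not expect a genuine obstacle here — the argument is bookkeeping built on \thmref{error}. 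The only points requiring a little care are the case split $\lambda=0$ versus $\lambda\neq 0$ in the lower bound for the distance (together with scaling-invariance of weight), and making explicit that $d_w\leq d-1$ already implies $w\notin\mathcal C$, which is what both activates \thmref{error} and empties $\mathbb P\mathcal C(d_w)$.
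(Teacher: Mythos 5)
Your proposal is correct and follows essentially the same route as the paper's (terse) proof: establish that $\mathcal C^w$ has parameters $[n,k+1,d_w]$ and then apply \thmref{error} together with the observation that $\mathbb P\mathcal C(d_w)=\emptyset$ because $1\leq d_w\leq d-1$; you merely spell out the two-sided estimate for the minimum distance and the dimension count that the paper leaves implicit. One blemish: your claim that $d_w\leq d-1$ \emph{forces} $w\notin\mathcal C$ is argued circularly (you appeal to ``the claimed dimension,'' i.e.\ the conclusion), and it is in fact not a consequence of the hypothesis, since $w\in\mathcal C$ gives $d_w=0\leq d-1$; instead, $w\notin\mathcal C$ should simply be taken as the standing assumption inherited from the setup of \thmref{error} (``the notation used previously''), exactly as the paper does, after which your argument goes through unchanged.
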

\begin{proof} Since there are no codewords in $\mathcal C$ of weight $d_w$, the set $\mathbb P\mathcal C(d_w)$ is empty.

Furthermore, $w\notin\mathcal C$ assures that the dimension of $\mathcal C^w$ is $k+1$, and the minimality of $d_w$ assures that $\mathcal C^w$ has minimum distance $d_w$. The result then follows from Theorem \ref{error}.
\end{proof}

\begin{rem}\label{find_error} The second bullet of the cited result in the Introduction translates into the following: if $1\leq d_w\leq \lfloor(d-1)/2\rfloor$, then $|\mathbb P\mathcal C^w(d_w)|=1$, meaning that $\mathcal C^w$ has exactly one projective codeword of minimum weight. In the next subsections we will show that it is possible to avoid some of the computational challenges associated with calculating this codeword.

Until then, we can determine immediately from this projective codeword the error in $w$. This projective codeword is $[x]$, where $$x=\underbrace{\lambda_1\cdot r_1(G^w)+\cdots+\lambda_k\cdot r_k(G^w)}_{\in\mathcal C}+\lambda\cdot w\in\mathcal C^w,$$ where $r_i(G^w)$ denotes the $i-$th row of $G^w$, $\lambda_i,\lambda\in \mathbb K$, and $\lambda\neq 0$ (otherwise $x\in\mathcal C$). So $[x]$ can be thought as the projective point $[\lambda_1,\ldots,\lambda_k,\lambda]$ in $\mathbb P^k$, and then the error in $w$ is the (affine) representative of this point in the affine open patch given by taking the last coordinate to be $1$.
\end{rem}

Though the following sits aside from the symbolical/theoretical approach to error-correction of linear codes (the driving force behind these notes), we end this part by mentioning that Corollary \ref{error_minweight} has been considered as a viable alternative to the syndrome-decoding of received words, in the context of attacking code-based cryptographic systems (such as McEliece or Niederreiter public-key systems). A good review can be found in \cite{BeLaPe}, and an extensive references list in \cite{Ca}.

\subsection{Finding projective codewords of minimum weight.} In this subsection we briefly describe the method presented in \cite{t} to obtain information about projective codewords of minimum weight from the commutative algebraic point of view.\footnote{For background on commutative algebra we suggest \cite{clo2} and \cite{sh}.}

Let $\mathcal C$ be an $[n,k,d]-$linear code with generating matrix $G$ of size $k\times n$. To each column $j$ of $G$ we associate a homogeneous linear form $L_j$ in $R:=\mathbb K[x_1,\ldots,x_k]$ with coefficients being the entries in the corresponding column $$L_j=a_{1j}x_1+a_{2j}x_2+\cdots+a_{kj}x_k.$$ Then, create the ideals $$I_s(\mathcal C)=\langle\{L_{j_1}\cdots L_{j_s}\}_{1\leq j_1<\cdots<j_s\leq n}\rangle\subset R.$$ \cite[Theorem 3.1]{t} shows that $d$ is the maximum integer $s$ such that the $\mathbb K-$vector subspace of $R_s$ spanned by the generators of $I_s(\mathcal C)$ has dimension ${{k+s-1}\choose{s}}$.

Concerning projective codewords of minimum weight, by \cite[Lemma 2.2]{t}, $I_{d+1}(\mathcal C)$ has primary decomposition $$I_{d+1}(\mathcal C)={\rm q}_1\cap\cdots\cap {\rm q}_m\cap J,$$ where ${\rm q}_i$ are prime ideals in $R$ each defining a point in $\mathbb P^{k-1}$, and $J\subset R$ with $\sqrt{J}=\langle x_1,\ldots,x_k\rangle$. The homogeneous coordinates of each point $V({\rm q}_i)\in\mathbb P^{k-1}$ give the coefficients in the linear combination of the rows of $G$ that equals a projective codeword of weight $d$.

From this perspective, there are two immediate consequences

\begin{itemize}
  \item The number of projective codewords of minimum weight equals the degree of the ideal $I_{d+1}(\mathcal C)$ (see \cite[Corollary 2.3]{t}), i.e.\ $$m=\deg(I_{d+1}(\mathcal C)).$$
  \item Because the multiplicity of each ${\rm q}_i$ is one, finding ${\rm q}_1\cap\cdots\cap {\rm q}_m$, and hence finding the projective codewords of minimum weight, it is enough to saturate the ideal $I_{d+1}(\mathcal C)$ rather than computing its radical. In general, for an ideal $I\subset R=\mathbb K[x_1,\ldots,x_k]$, the {\em saturation of $I$} is $\sat(I)=\{f\in R|f\in I:\langle x_1,\ldots,x_k\rangle^{n(f)}\mbox{ for some }n(f)\geq 1\}=I:\langle x_1,\ldots,x_k\rangle^{\infty}$.
\end{itemize}

\medskip

An immediate consequence of Theorem \ref{error} is the following recursive method of counting projective codewords of minimum weight. Let $\mathcal C$ be an $[n,k,d]-$linear code with generating matrix $G$. Let $j\in\{1,\ldots,k\}$ and let $\mathcal C_j$ be the linear code with generating matrix $G_j$ obtained by removing row $j$ from $G$. Then, $\mathcal C_j$ has length $n$ and dimension $k-1$. Denote by $d_j$ its minimum distance. Since $\mathcal C_j\subsetneq \mathcal C$, we have $$d_j\geq d.$$ Let $\alpha_s(\mathcal C)$ be the number of projective codewords of $\mathcal C$ of weight $s$, and denote by ${\rm n.n.}(w,\mathcal C)$ the number of nearest neighbors in $\mathcal C$ of a $w\notin\mathcal C$. Note that if $d_j>d$, then $\alpha_d(\mathcal C_j)=0$.

\begin{cor}\label{mincodewords} Let $r_j(G)$ denote the $j-$th row of $G$. With the notations above we have
$$\alpha_d(\mathcal C)-\alpha_d(\mathcal C_j)={\rm n.n.}(r_j(G),\mathcal C_j)=\deg(I_{d+1}(\mathcal C):x_j).$$
\end{cor}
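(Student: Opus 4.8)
The plan is to recognise $\mathcal{C}$ itself as an augmented code of $\mathcal{C}_j$ and then feed this into Theorem \ref{error} for the first equality and into the primary decomposition of \cite[Lemma 2.2]{t} for the second. Set $w:=r_j(G)$. Permuting the rows of $G$ so that row $j$ comes last (which changes neither $\mathcal{C}$ nor any weight), one sees that $G$ is precisely $G_j$ with the extra row $w$ appended, so $\mathcal{C}=(\mathcal{C}_j)^w$ in the notation of this section. Since $\mathcal{C}$ has dimension $k$ the map $\phi$ is injective, hence the rows of $G$ are linearly independent; in particular $w=r_j(G)\notin\langle r_i(G):i\ne j\rangle=\mathcal{C}_j$, i.e.\ $d_w\ge 1$, so Theorem \ref{error} applies to the pair $\mathcal{C}_j\subset(\mathcal{C}_j)^w=\mathcal{C}$.

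For the first equality, Theorem \ref{error} puts the nearest neighbours of $w$ in $\mathcal{C}_j$ in bijection with the projective codewords of weight $d_w$ of $\mathcal{C}$ that do not lie in $\mathcal{C}_j$; since $\mathcal{C}_j\subseteq\mathcal{C}$ gives $\mathbb{P}\mathcal{C}_j(s)\subseteq\mathbb{P}\mathcal{C}(s)$ for every $s$, this count equals $\alpha_{d_w}(\mathcal{C})-\alpha_{d_w}(\mathcal{C}_j)$. It remains to identify $d_w$ with $d$. Every word $w-v$ with $v\in\mathcal{C}_j$ is a nonzero element of $\mathcal{C}$ (nonzero because $w\notin\mathcal{C}_j$), so $wt(w-v)\ge d$ and hence $d_w\ge d$; for the reverse inequality one uses that $\mathcal{C}$ possesses a weight-$d$ word $\sum_l\lambda_l r_l(G)$ with $\lambda_j\ne 0$ — equivalently, a minimum-weight word not contained in $\mathcal{C}_j$ — which, after rescaling so that $\lambda_j=1$, has the form $w-v$ with $v\in\mathcal{C}_j$ and weight $d$, giving $d_w\le d$.

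For the second equality, invoke \cite[Lemma 2.2]{t}: $I_{d+1}(\mathcal{C})={\rm q}_1\cap\cdots\cap{\rm q}_m\cap J$, where each ${\rm q}_i$ is the homogeneous prime of a point $P_i\in\mathbb{P}^{k-1}$ whose homogeneous coordinates $(\lambda^{(i)}_1,\dots,\lambda^{(i)}_k)$ are the coefficients of a weight-$d$ projective codeword $\sum_l\lambda^{(i)}_l r_l(G)$, this correspondence being a bijection between $\{{\rm q}_1,\dots,{\rm q}_m\}$ and $\mathbb{P}\mathcal{C}(d)$, and $\sqrt{J}=\mathfrak{m}:=\langle x_1,\dots,x_k\rangle$. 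Colon commutes with finite intersection; for a homogeneous prime ${\rm q}$ one has ${\rm q}:x_j=R$ if $x_j\in{\rm q}$ and ${\rm q}:x_j={\rm q}$ otherwise; and $\mathfrak{m}=\sqrt{J}\subseteq\sqrt{J:x_j}$ forces $\sqrt{J:x_j}\in\{\mathfrak{m},R\}$ (as $\mathfrak{m}$ is the unique maximal homogeneous ideal), so $J:x_j$ is $\mathfrak{m}$-primary or all of $R$ and carries no component of positive dimension. Hence $\sat(I_{d+1}(\mathcal{C}):x_j)=\bigcap_{x_j\notin{\rm q}_i}{\rm q}_i$ and $\deg(I_{d+1}(\mathcal{C}):x_j)=\#\{i:x_j\notin{\rm q}_i\}$. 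Finally $x_j\notin{\rm q}_i$ iff the $j$-th coordinate $\lambda^{(i)}_j$ of $P_i$ is nonzero, which — again by linear independence of the rows of $G$ — holds iff the associated weight-$d$ codeword is not in $\mathcal{C}_j$; therefore $\deg(I_{d+1}(\mathcal{C}):x_j)$ equals the number of weight-$d$ projective codewords of $\mathcal{C}$ outside $\mathcal{C}_j$, namely $\alpha_d(\mathcal{C})-\alpha_d(\mathcal{C}_j)$.

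The step carrying the real content is the identification $d_w=d$ in the first equality, i.e.\ the assertion that the deleted row $r_j(G)$ participates nontrivially in some minimum-weight word of $\mathcal{C}$ (equivalently $\alpha_d(\mathcal{C})>\alpha_d(\mathcal{C}_j)$); when $d_j>d$ this is immediate, since then $\alpha_d(\mathcal{C}_j)=0$ and every weight-$d$ word of $\mathcal{C}$ automatically lies outside $\mathcal{C}_j$, which is the configuration flagged in the remark preceding the statement. The colon-ideal computation for the second equality is then routine once the primary decomposition of $I_{d+1}(\mathcal{C})$ is in hand, the only point needing attention being that the $\mathfrak{m}$-primary component $J$, and its colon by $x_j$, contribute nothing to the degree in $\mathbb{P}^{k-1}$.
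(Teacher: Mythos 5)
Your treatment of the second equality is correct but follows a genuinely different route from the paper's. You colon the primary decomposition of \cite[Lemma 2.2]{t} term by term: ${\rm q}_i:x_j$ is ${\rm q}_i$ or $R$ according to whether $x_j\notin{\rm q}_i$, the component $J$ colons to an ideal with radical $\langle x_1,\ldots,x_k\rangle$ (or to $R$) and so contributes nothing to the Hilbert polynomial, and $x_j\notin{\rm q}_i$ exactly when the weight-$d$ codeword attached to $V({\rm q}_i)$ has nonzero coefficient on $r_j(G)$, i.e.\ lies outside $\mathcal C_j$. The paper instead uses the short exact sequence $0\rightarrow R(-1)/(I_{d+1}(\mathcal C):x_j)\rightarrow R/I_{d+1}(\mathcal C)\rightarrow R/(I_{d+1}(\mathcal C)+\langle x_j\rangle)\rightarrow 0$, the isomorphism $R/(I_{d+1}(\mathcal C)+\langle x_j\rangle)\cong A/I_{d+1}(\mathcal C_j)$ with $A=\mathbb K[x_1,\ldots,\hat{x_j},\ldots,x_k]$, and additivity of Hilbert polynomials, giving $\deg(I_{d+1}(\mathcal C):x_j)=\deg I_{d+1}(\mathcal C)-\deg I_{d+1}(\mathcal C_j)=\alpha_d(\mathcal C)-\alpha_d(\mathcal C_j)$. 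Your version makes visible exactly which points survive the colon but needs the multiplicity-one statement and the bijection with $\mathbb P\mathcal C(d)$ from \cite{t}; the paper's version never touches the embedded component and only needs degrees, at the price of identifying $\deg I_{d+1}(\mathcal C_j)$ with $\alpha_d(\mathcal C_j)$ also when $d_j>d$ (where that ideal is a power of the maximal ideal of $A$ and has degree $0$). Both arguments prove the equality of the first and third quantities unconditionally.

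For the first equality you take the same path as the paper (Theorem \ref{error} applied to $w=r_j(G)$, using $\mathcal C=(\mathcal C_j)^w$), and you correctly isolate where the content lies: one must know $d_w=d$, i.e.\ that some minimum-weight codeword of $\mathcal C$ involves row $j$ nontrivially. You assert the existence of such a codeword without proof and settle only the case $d_j>d$. To be fair, the paper's own proof does no better --- it applies Theorem \ref{error} as if $d_w=d$ without comment --- but the point is real: when $d_j=d$ it can happen that every weight-$d$ codeword of $\mathcal C$ lies in $\mathcal C_j$. For instance, over $\mathbb F_2$ let $G$ have rows $(1,1,0,0,0,0)$ and $(0,0,1,1,1,1)$ and take $j=2$: then $d=2$, the unique projective minimum-weight codeword is the first row, so $\alpha_d(\mathcal C)-\alpha_d(\mathcal C_2)=0=\deg(I_3(\mathcal C):x_2)$, while $r_2(G)$ still has a unique nearest neighbor in $\mathcal C_2$ (the zero word, at distance $4$), so ${\rm n.n.}(r_2(G),\mathcal C_2)=1$. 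Thus the link with the middle quantity genuinely requires the hypothesis $d_{r_j(G)}=d$ (equivalently $\alpha_d(\mathcal C)>\alpha_d(\mathcal C_j)$); the assertion you left unproved is not a repairable omission but an implicit assumption shared with the statement and with the paper's proof. Under that assumption your argument is complete; without it only the equality of the outer two quantities survives.
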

\begin{proof} First, since $G$ is a $k\times n$ matrix of rank $k$, $r_j(G)\notin\mathcal C_j$.

Write $$\alpha_d(\mathcal C) = [\alpha_d(\mathcal C) - \alpha_d(\mathcal C_j)] + \alpha_d(\mathcal C_j).$$ The expression in brackets counts the number of projective codewords of weight $d$ in $\mathcal C$ but not in $\mathcal C_j$.

Setting $w = r_j(G)$ in Theorem \ref{error}, since $\mathcal C = (\mathcal
C_j)^w$ we have $$\alpha_d(\mathcal C) - \alpha_d(\mathcal C_j) = {\rm n.n.}(r_j(G), \mathcal C_j).$$

For the second equality, consider the following classical exact sequence of graded $R-$modules
$$0\longrightarrow \frac{R(-1)}{I_{d+1}(\mathcal C):x_j}\stackrel{\cdot x_j}\longrightarrow \frac{R}{I_{d+1}(\mathcal C)}\longrightarrow \frac{R}{I_{d+1}(\mathcal C)+\langle x_j\rangle}\longrightarrow 0.$$

If $A:=R/\langle x_j\rangle=\mathbb K[x_1,\ldots,\hat{x_j},\ldots,x_k]$, then we have the isomorphism $$\frac{R}{I_{d+1}(\mathcal C)+\langle x_j\rangle}\cong\frac{A}{I_{d+1}(\mathcal C_j)}.$$

All the ideals considered define projective schemes of dimensions $\leq 0$, so the Hilbert polynomial equals the degree of the corresponding ideal. The additivity of Hilbert polynomials under exact sequences proves the claim.
\end{proof}

\subsection{Using colon ideals to error-correct received words.} To find projective codewords of minimum weight, one could solve the ideal $I_{d+1}(\mathcal C)$ using Gr\"{o}bner bases (as \cite{dp} and \cite{BuPe} do), or find a primary decomposition for $\sat(I_{d+1}(\mathcal C))$; both methods are computationally expensive.

We return to the situation of Corollary \ref{error_minweight}: $1\leq d_w\leq d-1$. With regard to saturations, we have the following lemma:

\begin{lem}\label{lem_sat} Consider $I_{d_w+1}(\mathcal C^w)\subset S:=R[T]=\mathbb K[x_1,\ldots,x_k,T]$. If $1\leq d_w\leq d-1$, then there exists a positive integer $u\geq 1$ such that $$\sat(I_{d_w+1}(\mathcal C^w))=I_{d_w+1}(\mathcal C^w): T^u.$$
\end{lem}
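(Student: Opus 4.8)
The plan is to show that the variable $T$ is \emph{not a zerodivisor on}, and moreover eventually \emph{separates the components of}, the relevant ideal, so that coloning by a high enough power of $T$ achieves the full saturation with respect to the maximal ideal $\mathfrak{m}=\langle x_1,\dots,x_k,T\rangle$. First I would set up the general fact that for any homogeneous ideal $I\subset S$ whose primary decomposition has the property that the only $\mathfrak{m}$-primary component has $T$ appearing among its generators up to radical — more precisely, for which $\sat(I)=I:T^\infty$ — there is automatically a single power $u$ that works, because $S$ is Noetherian and the chain $I:T\subseteq I:T^2\subseteq\cdots$ stabilizes. So the entire content is the identity $\sat(I_{d_w+1}(\mathcal C^w))=I_{d_w+1}(\mathcal C^w):T^\infty$; the integer $u$ then exists for free.

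To prove that identity, I would invoke the primary decomposition description quoted from \cite[Lemma 2.2]{t}: writing $I:=I_{d_w+1}(\mathcal C^w)\subset S$, we have $I=\mathrm{q}_1\cap\cdots\cap\mathrm{q}_m\cap J$ with each $\mathrm{q}_i$ the prime ideal of a point in $\mathbb P^k$ and $\sqrt{J}=\mathfrak m$. Saturating kills exactly the $J$-component, so $\sat(I)=\mathrm{q}_1\cap\cdots\cap\mathrm{q}_m$, while $I:T^\infty=\bigcap_i(\mathrm q_i:T^\infty)\cap(J:T^\infty)$. Since $J$ is $\mathfrak m$-primary, some power of $T$ lies in $J$, hence $J:T^\infty=S$. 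For the prime components I must check that none of the points $V(\mathrm q_i)$ lies on the hyperplane $T=0$; equivalently, $T\notin\mathrm q_i$, so that $\mathrm q_i:T^\infty=\mathrm q_i$. This is where Corollary \ref{error_minweight} and the hypothesis $1\le d_w\le d-1$ enter: the points $V(\mathrm q_i)$ correspond to the projective codewords of weight $d_w$ in $\mathcal C^w$, and by Theorem \ref{error} these codewords are precisely $[\,\mu w+v\,]$ with $v\in\mathcal C$ and $\mu\ne 0$ — the nonvanishing of $\mu$ being forced because such a codeword is \emph{not} in $\mathcal C$ (there are no weight-$d_w$ codewords in $\mathcal C$ since $d_w<d$). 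Translating through the correspondence of \remref{find_error}, the last homogeneous coordinate of $V(\mathrm q_i)$ in $\mathbb P^k$ is exactly this $\mu$, which is nonzero; hence $V(\mathrm q_i)$ avoids $\{T=0\}$ and $T\notin\mathrm q_i$. Therefore $I:T^\infty=\mathrm q_1\cap\cdots\cap\mathrm q_m=\sat(I)$, as desired.

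The main obstacle, and the step I would spend the most care on, is the bookkeeping in the previous sentence: making the identification between "the last coordinate $\lambda$ of the point $[\lambda_1,\dots,\lambda_k,\lambda]\in\mathbb P^k$ produced by \remref{find_error}" and "the variable $T$ in the ideal $I_{d_w+1}(\mathcal C^w)\subset\mathbb K[x_1,\dots,x_k,T]$" completely precise. Concretely, the columns of $G^w$ give linear forms $L_j^w=a_{1j}x_1+\cdots+a_{kj}x_k+w_jT$ in $S$, and a point $p=[\lambda_1:\cdots:\lambda_k:\lambda]$ satisfies $L_{j_1}^w(p)\cdots L_{j_{d_w+1}}^w(p)=0$ for all index sets precisely when $\ell:=(\lambda_1 a_{1\bullet}+\cdots+\lambda_k a_{k\bullet}+\lambda w)$ has weight $\le d_w$; and $\lambda\ne 0$ says exactly that $\ell\notin\mathcal C$. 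So $T\in\mathrm q_i$ would mean $V(\mathrm q_i)\subset\{\lambda=0\}$, i.e. a weight-$d_w$ vector in $\mathcal C$ — impossible. Once this dictionary is laid out, the rest is the routine primary-decomposition manipulation above, plus the standard Noetherian stabilization giving the finite exponent $u$.
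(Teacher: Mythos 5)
Your proposal is correct and follows essentially the same route as the paper: both use the primary decomposition $I_{d_w+1}(\mathcal C^w)=\bar{\rm q}_1\cap\cdots\cap\bar{\rm q}_m\cap\bar{J}$ from \cite[Lemma 2.2]{t}, note that some power $T^u$ lies in the $\mathfrak m$-primary component $\bar J$, and show $T\notin\bar{\rm q}_i$ because a point with vanishing last coordinate would yield a nonzero codeword of $\mathcal C$ of weight $d_w\leq d-1$, contradicting the minimum distance. Your extra remark that the exponent $u$ can also be extracted from Noetherian stabilization of the chain $I:T\subseteq I:T^2\subseteq\cdots$ is a harmless variant of the paper's explicit choice of $u$ with $T^u\in\bar J$.
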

\begin{proof} We have $$I_{d_w+1}(\mathcal C^w)=\underbrace{\bar{\rm q}_1\cap\cdots\cap \bar{\rm q}_m}_{\sat(I_{d_w+1}(\mathcal C^w))} \cap\ \bar{J},$$ where $\bar{\rm q}_i$ are prime ideals in $S$ and $\bar{J}\subset S$ with $\sqrt{\bar{J}}=\langle x_1,\ldots,x_k,T\rangle$. Then there exists a positive integer $u\geq 1$ for which $T^u\in \bar{J}$.

$T$, and therefore $T^u$, is a non-zero divisor in $S/\bar{\rm q}_1\cap\cdots\cap\bar{\rm q}_m$, since otherwise one of the points $V(\bar{\rm q}_i)$ would have its last coordinate $0$, meaning that there would be a codeword of $\mathcal C^w$ of weight $\leq d-1$ which is a linear combination of the first $k$ rows of $G_w$ and hence a codeword of $\mathcal C$.

Then $$I_{d_w+1}(\mathcal C^w):T^u=(\sat(I_{d_w+1}(\mathcal C^w)):T^u)\cap (\bar{J}:T^u)=\sat(I_{d_w+1}(\mathcal C^w)),$$ and the proof is complete.
\end{proof}

It is desirable to have an upper bound $v$ for the $u$ above that depends only on $n,k,d$ and/or $d_w$, because then $I_{d_w+1}(\mathcal C^w):T^{v}=I_{d_w+1}(\mathcal C^w):T^{u}$. Then, one could avoid using a recursive method to find the saturation. Finding such an upper bound is equivalent to finding an upper-bound for the index of saturation and, consequently, to finding an upper bound for the Castelnuovo-Mumford regularity. It is well known that the regularity provides an upper bound for the complexity of Gr\"{o}bner basis algorithms that solve an ideal, and this is rather difficult to present. As the next example shows, we believe that this colon ideal method is quite easy to understand and implement even for non-experts.

\begin{exm} \label{sat} Let us consider the linear code over $\mathbb F_2$ with generating matrix $$G=\left(\begin{array}{cccccc} 1&0&0&1&1&0\\ 0&1&0&1&0&1\\ 0&0&1&0&1&1 \end{array}\right).$$ Suppose the word $w=(0,1,1,1,0,0)$ is received.

We have $$G^w=\left(\begin{array}{cccccc} 1&0&0&1&1&0\\ 0&1&0&1&0&1\\ 0&0&1&0&1&1\\0&1&1&1&0&0 \end{array}\right).$$ In $\mathbb P^3$, we have the six linear forms corresponding to the columns of $G^w$:
\begin{eqnarray}
L_1&:=& x\nonumber\\
L_2&:=& y+T\nonumber\\
L_3&:=& z+T\nonumber\\
L_4&:=& x+y+T\nonumber\\
L_5&:=& x+z\nonumber\\
L_6&:=& y+z.\nonumber
\end{eqnarray}

For the next computations we use Macaulay2 (\cite{GrSt}).

Create $$I_2(\mathcal C^w)=\langle \{L_iL_j\}_{1\leq i<j\leq 6}\rangle$$ and calculate its height: $ht(I_2(\mathcal C^w))=3$. Because $3<4=ht(I_1(\mathcal C^w))$, by \cite{dp} we obtain $$d_w=1=(d-1)/2.$$

Next we colon $I_{d_w+1}(\mathcal C^w)$ by successive powers of $T$ until we obtain the ideal of a point. We have $$I_2(\mathcal C^w):T=\langle x,y+T,z+T\rangle.$$ This is the ideal of the projective point $P_w:=[0,-\lambda,-\lambda,\lambda]\in\mathbb P^3, \lambda\neq 0$.

The projective codeword of minimum weight is $$0\cdot r_1(G^w)+ (-\lambda)\cdot r_2(G^w)+(-\lambda)\cdot r_3(G^w)+\lambda\cdot r_4(G^w)=(0,0,0,0,-\lambda,-2\lambda).$$ Since this is over $\mathbb F_2$, we obtain the precise error to be $(0,0,0,0,1,0)$.
\end{exm}

\section{Regularity of some ideals generated by products of linear forms and error-correction of good words received}

It is known that for any $[n,k,d]-$linear code $\mathcal C$, if $1\leq i\leq d$, then $I_i(\mathcal C)=\langle x_1,\ldots,x_k\rangle ^i$ (see \cite[Theorem 3.1]{t}), which leads naturally to the question of the the structure of $I_i(\mathcal C)$ for $i\geq d+1$. Even when $i=d+1$, in general the embedded component of $I_{d+1}(\mathcal C)$ (which is the defining ideal of the scheme of projective codewords of minimum weight) is not known or well understood, leading to difficulties in finding its non-trivial properties.

When $\mathcal C$ is MDS, everything is known about $I_i(\mathcal C),i\geq d+1$ (see \cite[Proposition 2.9]{GeHaMi}, or \cite[Remark 36]{BuPe}, or the second part of the proof of \cite[Proposition 2.1]{t2}), simply because the Eagon-Northcott complex becomes a free resolution.

\medskip

The Castelnuovo-Mumford regularity, or simply the {\em regularity}, of an ideal $I\subset S:=\mathbb K[x_0,\ldots,x_n]$, denoted ${\rm reg}(I)$, is one of the most important homological invariants in commutative algebra; as mentioned previously it can provide an upper bound on the complexity of the Gr\"{o}bner basis algorithms that solve the ideal $I$. If $$0\rightarrow \oplus_{i=1}^{n_t} S(-b_{i,t})\rightarrow \oplus_{i=1}^{n_{t-1}} S(-b_{i,t-1})\rightarrow\cdots\rightarrow \oplus_{i=1}^{n_0} S(-b_{i,0})\rightarrow I\rightarrow 0$$ is a graded minimal free resolution of $I$, then $${\rm reg}(I)=\max\{b_{i,j}-j: 0\leq j\leq t, 1\leq i\leq n_j\}.$$

From the definition of $\sat(I)$, the saturation of $I$ with respect to the maximal ideal, one usually defines a number called the {\em saturation index} (or index of saturation) of $I$, denoted here ${\rm s.ind.}(I)$, which is the smallest integer $\delta$ such that $(I)_m=(\sat(I))_m$ (the degree $m$ pieces) for all $m\geq\delta$.

The connection between these two numbers is $${\rm reg}(I)=\max\{{\rm s.ind.}(I),{\rm reg}(\sat(I))\}.$$ See \cite{BeGi} for more details.

\medskip

In regard to the regularity of $I_{d+1}(\mathcal C)$, when $\mathcal C$ is any linear code with $\alpha_d(\mathcal C)=1$ (i.e.\ $\mathcal C$ has only one projective codeword of minimum weight), after doing a fair amount of examples with \cite{GrSt} we arrived to the same conclusion as one of the referees: ${\rm reg}(I_{d+1}(\mathcal C))=d+1$, which, by \cite[Theorem 1.2(2)]{EiGo}, is equivalent to $I_{d+1}(\mathcal C)$ having linear graded free resolution. Below we give a proof of this interesting problem, for a particular case that fits very well within our error-correction discussion.

First, if $\deg(I_{d_w+1}(\mathcal C^w))>1$, then $w$ has at least two nearest neighbors in $\mathcal C$, and in practice the received word is requested to be sent again. So we will only consider the case when $\deg(I_{d_w+1}(\mathcal C^w))=1$, meaning that $\sat(I_{d_w+1}(\mathcal C^w))$ consists of just one prime ideal, and finding its primary decomposition becomes superfluous. So for the remainder of these notes we assume the primary decomposition $$I_{d_w+1}(\mathcal C^w)=\bar{\rm q}\cap\bar{J}\subset S:=\mathbb K[x_1,\ldots,x_k,T],$$ where $\bar{\rm q}\subset S$ is a prime ideal of codimension (i.e.\ height) $k$ generated by linear forms and $\sqrt{\bar{J}}=\langle x_1,\ldots,x_k,T\rangle$. Note that if $1\leq d_w\leq \lfloor(d-1)/2\rfloor$ this is always the case.

Since $d_w\geq 1$, and $I_{d_w+1}(\mathcal C^w)$ is generated in degree $d_w+1$, then $${\rm reg}(I_{d_w+1}(\mathcal C^w))\geq d_w+1>1={\rm reg}(\underbrace{\sat(I_{d_w+1}(\mathcal C^w))}_{\bar{\rm q}},$$ giving that the regularity and the saturation index of $I_{d_w+1}(\mathcal C^w)$ must coincide.

\medskip

Furthermore, we may assume that the received word $w$ has weight equal to
$d_w$, because we can write $w = v+\epsilon$ with $v \in \mathcal C$ and
$\epsilon \in \mathbb K^n$ such that $wt(\epsilon) = d_w$. Let $G$ be the
generating matrix of $\mathcal C$. Then $v \in \mathcal C$ is a linear
combination of the rows of $G$. Reducing the last row of $G^w$ by the
coefficients of this linear combination, we obtain $G^{\epsilon}$, which is
also a generating matrix for $\mathcal C^w = \mathcal
C^{\epsilon}$. Therefore, we may assume $w = \epsilon$.

\begin{thm}\label{regularity} Let $\mathcal C$ be any $[n,k,d]-$linear code, $d\geq 3$, and let $w\in\mathbb K^n$ be of weight $1\leq m\leq \lfloor(d-1)/2\rfloor$. Then $${\rm reg}(I_{m+1}(\mathcal C^w))= m+1.$$
\end{thm}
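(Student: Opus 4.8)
The plan is to reduce the statement to a fact about a single graded component. Since $I_{m+1}(\mathcal C^w)$ is generated in degree $m+1$ (and vanishes in lower degrees), ${\rm reg}(I_{m+1}(\mathcal C^w))\ge m+1$, and the discussion preceding the theorem shows that this regularity coincides with the saturation index of $I_{m+1}(\mathcal C^w)$; so the whole content is the inequality ${\rm s.ind.}(I_{m+1}(\mathcal C^w))\le m+1$, that is, $(I_{m+1}(\mathcal C^w))_j=(\sat(I_{m+1}(\mathcal C^w)))_j$ for all $j\ge m+1$. First I would identify the saturation. Since $w$ has weight $m=d_w$, the class $[w]$ lies in $\mathbb P\mathcal C^w(d_w)$, and by \remref{find_error} this set has exactly one element, so $[w]$ is the unique minimum-weight projective codeword of $\mathcal C^w$; its coefficients are $0$ against each row of $G$ and $1$ against the augmented row $w$, so the associated point of $\mathbb P^k$ is $[0:\cdots:0:1]$, with homogeneous ideal $\langle x_1,\ldots,x_k\rangle\subset S$. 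Hence $\sat(I_{m+1}(\mathcal C^w))=\langle x_1,\ldots,x_k\rangle$, which has regularity $1$ (as already noted). Moreover $I_{m+1}(\mathcal C^w)\subseteq\langle x_1,\ldots,x_k\rangle$: any product of $m+1$ of the forms $L_1,\ldots,L_n$ uses some index outside the support of $w$ (because $m+1$ exceeds that support's size $m$), and for such an index $L_j$ is a linear form in $x_1,\ldots,x_k$. So it remains to show $(\langle x_1,\ldots,x_k\rangle)_j\subseteq(I_{m+1}(\mathcal C^w))_j$ for every $j\ge m+1$.

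The device is puncturing. Permuting and rescaling the columns of $G^w$ — operations that leave $I_{m+1}(\mathcal C^w)$ unchanged and only replace $\mathcal C$ by an equivalent code — let us assume the support of $w$ is $\{1,\ldots,m\}$ with $w_1=\cdots=w_m=1$; writing $\ell_j=a_{1j}x_1+\cdots+a_{kj}x_k\in R$ for the linear form of column $j$ of $G$, we get $L_j=\ell_j+T$ for $j\le m$ and $L_j=\ell_j$ for $j>m$. Let $\bar{\mathcal C}$ be $\mathcal C$ punctured at $\{1,\ldots,m\}$. Since $m<d$, no nonzero codeword of $\mathcal C$ is supported inside $\{1,\ldots,m\}$, so $\bar{\mathcal C}$ has dimension $k$, with attached forms $\ell_{m+1},\ldots,\ell_n$; and its minimum distance $\bar d$ satisfies $\bar d\ge d-m\ge m+1$ — this is exactly where the hypothesis $m\le\lfloor(d-1)/2\rfloor$ enters. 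By \cite[Theorem 3.1]{t} applied to $\bar{\mathcal C}$, $I_\ell(\bar{\mathcal C})=\langle x_1,\ldots,x_k\rangle^\ell$ for $1\le\ell\le\bar d$; in particular the products $\ell_{j_1}\cdots\ell_{j_\ell}$ with $m<j_1<\cdots<j_\ell\le n$ span the whole space $R_\ell$ whenever $\ell\le\bar d$.

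The heart of the argument is the claim that $T^sR_{m+1-s}\subseteq I_{m+1}(\mathcal C^w)$ for $s=0,1,\ldots,m$, which I would prove by induction on $s$. For $s=0$ this is $R_{m+1}\subseteq I_{m+1}(\mathcal C^w)$, true because the generators $\ell_{j_1}\cdots\ell_{j_{m+1}}$ (with $j_i>m$) span $R_{m+1}$, using $m+1\le\bar d$. For the step, fix $1\le s\le m$ and $S_2\subseteq\{m+1,\ldots,n\}$ with $|S_2|=m+1-s$, and expand the generator $\prod_{i=1}^{s}(\ell_i+T)\prod_{j\in S_2}\ell_j=\sum_{s'=0}^{s}(e_{s-s'}(\ell_1,\ldots,\ell_s)\prod_{j\in S_2}\ell_j)T^{s'}$, where $e_t$ is the $t$-th elementary symmetric polynomial, $e_0=1$. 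For each $s'<s$ the coefficient $e_{s-s'}(\ell_1,\ldots,\ell_s)\prod_{j\in S_2}\ell_j$ has degree $(s-s')+(m+1-s)=m+1-s'$, hence lies in $R_{m+1-s'}$, so by the induction hypothesis its product with $T^{s'}$ lies in $T^{s'}R_{m+1-s'}\subseteq I_{m+1}(\mathcal C^w)$; subtracting all these from the generator leaves $T^s\prod_{j\in S_2}\ell_j\in I_{m+1}(\mathcal C^w)$. Letting $S_2$ range over all such subsets, and using that the $\prod_{j\in S_2}\ell_j$ span $R_{m+1-s}$ (valid since $1\le m+1-s\le m<\bar d$), gives $T^sR_{m+1-s}\subseteq I_{m+1}(\mathcal C^w)$, completing the induction. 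Because $I_{m+1}(\mathcal C^w)$ is an ideal, it then contains $\sum_{s=0}^{m}T^s\langle x_1,\ldots,x_k\rangle^{m+1-s}S=\langle\,x^aT^b:|a|\ge 1,\ |a|+b\ge m+1\,\rangle$, whose degree-$j$ piece equals $(\langle x_1,\ldots,x_k\rangle)_j$ for every $j\ge m+1$. Combined with the reverse inclusion from the first paragraph, this gives $(I_{m+1}(\mathcal C^w))_j=(\sat(I_{m+1}(\mathcal C^w)))_j$ for $j\ge m+1$, hence ${\rm reg}(I_{m+1}(\mathcal C^w))={\rm s.ind.}(I_{m+1}(\mathcal C^w))=m+1$.

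The step I expect to be the main obstacle is the quantitative control of the punctured code: the whole induction rests on $\bar d\ge m+1$, which is what forces each relevant ideal $I_\ell(\bar{\mathcal C})$ with $\ell\le m+1$ to be an honest power of the maximal ideal, and it is precisely here that $m\le\lfloor(d-1)/2\rfloor$ is indispensable. A secondary point requiring care is the graded bookkeeping inside the induction — verifying that each lower $T$-coefficient $e_{s-s'}(\ell_1,\ldots,\ell_s)\prod_{j\in S_2}\ell_j$ lands in exactly the piece $R_{m+1-s'}$ that the induction hypothesis controls. By contrast, identifying the saturation with $\langle x_1,\ldots,x_k\rangle$ and reducing the problem to a single graded degree are routine once one observes that $[w]$ itself is the unique minimum-weight projective codeword of $\mathcal C^w$.
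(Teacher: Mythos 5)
Your proof is correct, and its core is organized differently from the paper's. Both arguments share the same skeleton: reduce to showing the saturation index is at most $m+1$, identify $\sat(I_{m+1}(\mathcal C^w))=\langle x_1,\ldots,x_k\rangle$, and establish the family of containments $T^s\,\langle x_1,\ldots,x_k\rangle^{m+1-s}\subseteq I_{m+1}(\mathcal C^w)$ for $s=0,\ldots,m$, with the case $s=0$ obtained exactly as in the paper's Claim by puncturing $\mathcal C$ at the support of $w$ and invoking \cite[Theorem 3.1]{t} (this is where $m\le\lfloor(d-1)/2\rfloor$ enters, via $\delta_m\ge d-m\ge m+1$, in both treatments). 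Where you diverge is the inductive engine: the paper inducts on $n-k$, puncturing the augmented code one support column at a time, which forces it to re-verify hypotheses for the smaller codes $(\mathcal C')^{w'}$, to split off a special case $m=1$, and to run a separate linear-independence argument (choosing $k$ independent forms among unused indices) to get $I_{m'+1}((\mathcal C')^{w'})\subseteq I_{m+1}(\mathcal C^w):T$; you instead fix the code once and for all, puncture only once at the full support, and induct on the exponent $s$ of $T$, expanding each generator $\prod_{i\le s}(\ell_i+T)\prod_{j\in S_2}\ell_j$ via elementary symmetric polynomials and absorbing the lower $T$-degree terms by the inductive hypothesis. Your version needs only the spanning facts $I_\ell(\mathcal C(m))=\langle x_1,\ldots,x_k\rangle^\ell$ for $\ell\le m+1$, all from one application of \cite[Theorem 3.1]{t}, and it delivers directly the monomial containment $\langle x^aT^b:|a|\ge 1,\ |a|+b\ge m+1\rangle\subseteq I_{m+1}(\mathcal C^w)$, which in particular contains the ingredient $\langle x_1,\ldots,x_k\rangle\cdot T^m\subseteq I_{m+1}(\mathcal C^w)$ needed for the paper's final corollary; the paper's route, by contrast, produces along the way the intermediate colon containments for the partially punctured codes, but at the cost of a more delicate bookkeeping. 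Two small points you state without proof but which are easy to supply: that $d_w=m$ (if some $\epsilon$ with $wt(\epsilon)<m$ had $w-\epsilon\in\mathcal C$, then $w-\epsilon$ would be a nonzero codeword of weight less than $2m\le d-1$), and that rescaling columns of $G^w$ only multiplies the forms $L_j$ by nonzero scalars, hence leaves the ideal unchanged; neither affects the validity of the argument.
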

\begin{proof} After an appropriate permutation of the columns of $G$ and consequently of $G^w$, we may assume that $w$ has the canonical form $$w=(0,\ldots,0,a_{n-m+1},a_{n-m+2},\ldots,a_n),\mbox{ where }a_i\neq 0,$$ and $$I_{m+1}(\mathcal C^w)=\underbrace{\langle x_1,\ldots,x_k\rangle}_{\bar{\rm q}=\sat(I_{m+1}(\mathcal C^w))}\cap\ \bar{J}\subset S:=\mathbb K[x_1,\ldots,x_k,T].$$

Also, we denote with $L_i\in S,i=1,\ldots,n$ the linear forms dual to the columns of $G^w$, and with $\ell_i\in R:=\mathbb K[x_1,\ldots,x_k], i=1,\ldots, n$ the linear forms dual to the columns of $G$. We have
\begin{eqnarray}
L_i&=&\ell_i,\mbox{ for }i=1,\ldots,n-m\nonumber\\
L_j&=&\ell_j+a_jT,\mbox{ for }j=n-m+1,\ldots,n.\nonumber
\end{eqnarray}

\noindent \underline{Claim}: $\bar{\rm q}^{m+1}\subset I_{m+1}(\mathcal C^w)$.

\medskip

\noindent \underline{Proof of Claim}. Let $\mathcal C(i)$ be the puncturing of
$\mathcal C$ at the last $i$ columns of $G$.\footnote{We often make use of
  this technique of {\em puncturing} a code. For more details see \cite{hp1},
  page 465.} Then $\mathcal C(i)$ is an $[n-i,k_i,\delta_i]-$ linear
code. Since $d\geq 3 > 1$, then $k_1=k$. Note that $k_{i+1} = k_i$ or $k_i -
1$, since the dimension of the row space of a matrix can only be changed by
$1$ as a result of deleting a column. Suppose that $k_u=k$, but $k_{u+1}=k-1$,
for some $u\in\{1,\ldots,m-1\}$. Then,
$\delta_u=1$. But $$\delta_j-\delta_{j+1}\leq 1,\mbox{ for any
}j=0,\ldots,u-1, \delta_0:=d.$$ So $d\leq u+1\leq m$, contradicting the
hypotheses. The conclusion is that $\mathcal C(m)$ is an
$[n-m,k,\delta_m]-$linear code with $\delta_m\geq d-m$. Since $d-m\geq m+1$,
by \cite[Theorem 3.1]{t}, we have $$I_{m+1}(\mathcal C(m))=\langle
x_1,\ldots,x_k\rangle^{m+1}\subset R:=\mathbb K[x_1,\ldots,x_k].$$ Lifting up
to $S=R[T]$, since $L_i=\ell_i, i=1,\ldots,n-m$, we obtain the desired
inclusion $\bar{\rm q}^{m+1}\subset I_{m+1}(\mathcal C^w)$.

\vskip .1in

We will prove by induction on $n-k\geq 2$ that $$\bar{\rm q}^{m-i+1}T^i\subset I_{m+1}(\mathcal C^w), i=0,\ldots,m.$$ Given this, with $I_{m+1}(\mathcal C^w)\subset \bar{\rm q}$, we obtain $(I_{m+1}(\mathcal C^w))_{m+1}=(\bar{\rm q})_{m+1}$. So $$m+1\leq{\rm reg}(I_{m+1}(\mathcal C^w))={\rm s.ind.}(I_{m+1}(\mathcal C^w))\leq m+1,$$ and hence the proof of the theorem.

\vskip .1in

\noindent \underline{Special case}: When $m=1$, we have $\bar{\rm q}^2\subset I_2(\mathcal C^w)$ and $\bar{\rm q}\cdot T\subset I_2(\mathcal C^w)$.

We have $L_1=\ell_1,\ldots,L_{n-1}=\ell_{n-1},L_n=T+\ell_n$, and therefore $$I_2(C^w)=\langle \{\ell_i\ell_j\}_{1\leq i<j\leq n-1},\ell_1T+\ell_1\ell_n,\ldots,\ell_{n-1}T+\ell_{n-1}\ell_n\rangle.$$ With the notations in the Claim above, we have $\langle \{\ell_i\ell_j\}_{1\leq i<j\leq n-1}\rangle=I_2(\mathcal C(1))$. Since $d\geq 3$, then $k_1=k$, and $\delta_1=d$ or $d-1$, hence $\delta_1\geq 2$. So, by \cite[Theorem 3.1]{t}, $I_2(\mathcal C(1))=\langle x_1,\ldots,x_k\rangle^2$, giving that $$I_2(C^w)=\langle \{\ell_i\ell_j\}_{1\leq i<j\leq n-1},\ell_1T,\ldots,\ell_{n-1}T\rangle.$$ Therefore $$\bar{\rm q}=\langle x_1,\ldots,x_k\rangle=\langle \ell_1,\ldots,\ell_{n-1}\rangle\subseteq I_2(C^w):T.$$ Together with the Claim, the special case is shown.

\vskip .1in

We now move to an inductive proof of Theorem \ref{regularity}.

\noindent {\bf Case $n-k=2$; base case.} Then, from Singleton bound $d\leq n-k+1$, we necessarily have $d=3$ and $m=1$. The special case proves the base case.

\vskip .1in

\noindent {\bf Case $n-k\geq 3$; induction step.} From the special case above, we may assume $m\geq 2$.

Let $\mathcal C':=\mathcal C(1)=\mathcal C-\{\ell_n\}$ be the puncturing of $\mathcal C$ at the last column of $G$. Since $d\geq 3$, the dimension of $\mathcal C'$ is $k':=k_1=k$, and the minimum distance is $d':=\delta_1=d$ or $d-1$. The length of $\mathcal C'$ is $n'=n-1$.

Let $w'=(0,\ldots,0,a_{n-m+1},\ldots,a_{n-1})\in \mathbb K^{n-1}$, obtained from $w$ by removing the last entry. Then, by keeping with the notation used throughout this paper, we have $$(\mathcal C')^{w'}=\mathcal C^w-\{L_n\}.$$

If $k_{w'}$ and $d_{w'}$ are the dimension and the minimum distance of this new linear code, respectively, since $d_w=m\geq 2$, then $k_{w'}=k'+1=k+1$. As the weight of $w'\in(\mathcal C')^{w'}$ is $m-1$, we must have $d_{w'}=m-1$.

Let $m':=m-1$. Then $m'\leq \lfloor(d-1)/2\rfloor-1\leq
\lfloor(d-2)/2\rfloor\leq\lfloor(d'-1)/2\rfloor$. Also, by the construction of
$w'$ from $w$, we have $$\sat(I_{m'+1}((\mathcal C')^{w'})=\langle
x_1,\ldots,x_k\rangle=\bar{\rm q},$$ and by the inductive step, $\bar{\rm q}^{m'-i+1}T^i\subset I_{m'+1}((\mathcal C')^{w'}), i=0,\ldots,m'$.

\medskip

It is clear that $$I_{m+1}(\mathcal C^w)=L_n\cdot I_{m'+1}((\mathcal C')^{w'})+I_{m+1}((\mathcal C')^{w'}).$$ We write $L_n=a_nT+\ell_n$, and let $L_{i_1}\cdots L_{i_m}\in I_{m}((\mathcal C')^{w'})$ be arbitrary. Suppose $i_1,\ldots,i_s\in\{1,\ldots,n-m\}$, for some $m\geq s\geq 1$.

Let us consider once again the linear code $\mathcal C(m)$ we have seen in the Claim. Let $$\mathcal C(m;i_1,\ldots,i_j)=\mathcal C(m)-\{\ell_{i_1},\ldots,\ell_{i_j}\}, j=1,\ldots,s.$$ Denote with $k_{m,j}$ and $\delta_{m,j}$ the dimension and, respectively, the minimum distance of $\mathcal C(m;i_1,\ldots,i_j)$.

We repeat the argument in the proof of Claim to show that $k_{m,s}=k$. Suppose to the contrary that for some $v\in\{1,\ldots,s-1\}$, we have $k_{m,v}=k$ and $k_{m,v+1}=k-1$. Then, $\delta_{m,v}=1$. Since $\delta_{m,j}-\delta_{m,j+1}\leq 1, j=0,\ldots,v-1,$ where $\delta_{m,0}:=\delta_m$, adding all these inequalities, we obtain $\delta_m\leq v+1$. Since $v\leq s-1$, and $s\leq m$, together with $\delta_m\geq d-m$, we obtain $$d-m\leq m,$$ which contradicts the hypotheses of the theorem.

As $n-k\geq d-1\geq 2m$, there exist indices $j_1,\ldots,j_k\in \{1,\ldots,n-m\}-\{i_1,\ldots,i_{m}\}$, such that $\ell_{j_1}=L_{j_1},\ldots,\ell_{j_k}=L_{j_k}$ are linearly independent (from $k_{m,s}=k$). So $\ell_n$ can be written as a linear combination of $L_{j_1},\ldots,L_{j_k}$, giving that $$\ell_nL_{i_1}\cdots L_{i_m}\in I_{m+1}((\mathcal C')^{w'}),$$ and so $$I_{m'+1}((\mathcal C')^{w'})\subseteq I_{m+1}(\mathcal C^w):T.$$

With the induction step and $m'=m-1$, we have $\bar{\rm q}^{m-i}T^{i+1}\subset I_{m+1}((\mathcal C)^w), i=0,\ldots,m-1$. Denoting $i+1=:j$, we have $$\bar{\rm q}^{m-j+1}T^j\subset I_{m+1}((\mathcal C)^w), j=1,\ldots,m,$$ and with the Claim at the beginning, the result is shown.
\end{proof}

\begin{cor} Let $\mathcal C$ be any $[n,k,d]-$linear code, $d\geq 3$, and let $w\in\mathbb K^n$ be such that $1\leq d_w\leq \lfloor(d-1)/2\rfloor$. Then, $$\sat(I_{d_w+1}(\mathcal C^w))=I_{d_w+1}(\mathcal C^w): T^{d_w}.$$
\end{cor}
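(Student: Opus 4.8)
The plan is to combine Lemma~\ref{lem_sat}, which already guarantees that $\sat(I_{d_w+1}(\mathcal C^w)) = I_{d_w+1}(\mathcal C^w):T^u$ for \emph{some} $u \ge 1$, with the sharp regularity information extracted from Theorem~\ref{regularity}. The first step is the reduction already worked out in the discussion preceding Theorem~\ref{regularity}: since $w = v + \epsilon$ with $v \in \mathcal C$ and $wt(\epsilon) = d_w$, the augmented codes satisfy $\mathcal C^w = \mathcal C^\epsilon$, so $I_{d_w+1}(\mathcal C^w) = I_{d_w+1}(\mathcal C^\epsilon)$ and we may assume $w$ has weight exactly $m := d_w$. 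Under the hypothesis $1 \le d_w \le \lfloor (d-1)/2 \rfloor$ we are then exactly in the setting of Theorem~\ref{regularity} (with this $m$), which gives $\mathrm{reg}(I_{m+1}(\mathcal C^w)) = m+1$, and the paragraph before the theorem identifies this with the saturation index: $\mathrm{s.ind.}(I_{m+1}(\mathcal C^w)) = m+1$.

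The second step is to turn the statement about the saturation index into the precise exponent $T^{m}$. The containment $\sat(I_{m+1}(\mathcal C^w)) \supseteq I_{m+1}(\mathcal C^w):T^m$ is automatic (any colon by a single variable lies inside the full saturation). For the reverse containment, I would invoke the proof of Theorem~\ref{regularity} directly rather than just its statement: that proof establishes $\bar{\mathrm q}^{\,m-i+1}T^i \subset I_{m+1}(\mathcal C^w)$ for $i = 0,\ldots,m$. Taking $i = m$ gives $\bar{\mathrm q}\cdot T^m \subset I_{m+1}(\mathcal C^w)$, i.e. $\bar{\mathrm q} \subseteq I_{m+1}(\mathcal C^w):T^m$. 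Combined with $I_{m+1}(\mathcal C^w):T^m \subseteq \bar{\mathrm q}$ — which holds because $I_{m+1}(\mathcal C^w) \subseteq \bar{\mathrm q} = \sat(I_{m+1}(\mathcal C^w))$ and colon preserves this, while any element colon-killed by $T^m$ still lies in the saturation — we conclude $I_{m+1}(\mathcal C^w):T^m = \bar{\mathrm q} = \sat(I_{m+1}(\mathcal C^w))$, as desired.

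Alternatively, and perhaps more cleanly, one can argue purely numerically: the saturation index being $m+1$ means $(I_{m+1}(\mathcal C^w))_t = (\bar{\mathrm q})_t$ for all $t \ge m+1$, while in degree $m+1$ this is where the generators live. Since $\bar{\mathrm q}$ is generated in degree $1$ and $T$ has degree $1$, multiplying a degree-$1$ element of $\bar{\mathrm q}$ by $T^m$ lands in degree $m+1$, where $\bar{\mathrm q}$ and $I_{m+1}(\mathcal C^w)$ agree; hence $\bar{\mathrm q}\cdot T^m \subseteq I_{m+1}(\mathcal C^w)$, giving the containment $\sat \subseteq I_{m+1}(\mathcal C^w):T^m$ once more. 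Either route closes the proof.

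The main obstacle is really a bookkeeping one rather than a conceptual one: one must be careful that the reduction from $d_w$ to a weight-$m$ word genuinely lands inside the exact hypotheses of Theorem~\ref{regularity} (in particular that $d \ge 3$ and $1 \le m \le \lfloor(d-1)/2\rfloor$ are inherited), and that the exponent claimed is $T^{d_w}$ and not $T^{d_w+1}$ — i.e. that the critical degree $m+1$ is reached by multiplying a \emph{linear} generator of $\bar{\mathrm q}$ by $T^m$, not $T^{m+1}$. Once the indexing is pinned down, the corollary is an immediate consequence of Theorem~\ref{regularity} and its proof, with no new input required.
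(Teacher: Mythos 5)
Your proposal is correct and follows essentially the paper's own proof: after the same reduction to a word of weight $d_w$, both inclusions are obtained exactly as in the paper, namely $\bar{\rm q}\cdot T^{m}\subset I_{m+1}(\mathcal C^w)$ from the $i=m$ case of the inductive claim in the proof of Theorem~\ref{regularity}, and $I_{m+1}(\mathcal C^w):T^{m}\subseteq\bar{\rm q}$ by the argument in the proof of Lemma~\ref{lem_sat}. One small caution: your parenthetical claim that a colon by a single variable automatically lies inside the saturation is false in general (e.g.\ $(xT):T=(x)\not\subseteq (xT)=\sat((xT))$ in $\mathbb K[x,T]$), but this is harmless here because you immediately re-derive that containment correctly from $I_{m+1}(\mathcal C^w)\subseteq\bar{\rm q}$ with $\bar{\rm q}$ prime and $T\notin\bar{\rm q}$, which is precisely the non-zero-divisor argument of Lemma~\ref{lem_sat}.
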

\begin{proof} Denote $m=d_w$, and as before, we may assume $$I_{m+1}(\mathcal C^w)=\underbrace{\langle x_1,\ldots,x_k\rangle}_{\bar{\rm q}=\sat(I_{m+1}(\mathcal C^w))}\cap\ \bar{J}\subset S:=\mathbb K[x_1,\ldots,x_k,T].$$

From the proof of Theorem \ref{regularity} we have $\bar{\rm q}\cdot T^m\subset I_{m+1}(\mathcal C^w)$, and therefore $\bar{\rm q}\subset I_{m+1}(\mathcal C^w):T^m$. The colon ideal is included in $\bar{\rm q}$, by the proof of Lemma \ref{lem_sat}, and hence we have equality throughout.
\end{proof}

\vskip .1in

\noindent{\bf Acknowledgement} The first author is supported by the Brian and Gayle Hill Fellowship awarded by the College of Science at University of Idaho.

We are grateful to the two anonymous referees for important suggestions and corrections.

\renewcommand{\baselinestretch}{1.0}
\small\normalsize 

\bibliographystyle{amsalpha}

\begin{thebibliography}{10}

\bibitem{Ba} A. Barg, {\em Complexity issues in coding theory}, in Handbook of Coding Theory, vol. 1, Elsevier Science, 1998, pp. 649--754.

\bibitem{BeGi} I. Bermejo, P. Gimenez, {\em Saturation and Castelnuovo-Mumford regularity}, J. Algebra, 303 (2006), pp. 592--617.

\bibitem{BeLaPe} D. Bernstein, T. Lange, C. Peters, {\em Attacking and defending the McEliece cryptosystem}, pp. 31--46 in : J. Buchmann, J. Ding (editors). Post-Quantum Cryptography, Second international workshop, PQCrypto 2008, Cincinnati, OH, USA, October 17-19, 2008, proceedings, Lecture Notes in Computer Science 5299, Springer, 2008.

\bibitem{BuPe} S. Bulygin, R. Pellikaan, {\em Bounded distance decoding of linear error-correcting codes with Gr\"{o}bner bases}, J. Symbolic Computation, 44 (2009), pp. 1626--1643.

\bibitem{Ca} P.-L. Cayrel, {\em Code-based cryptography: publications}, Available at http://cayrel.net/.

\bibitem{clo} D. Cox, J. Little, D. O'Shea, {\em Using Algebraic Geometry}, Springer-Verlag, New York, 1998.

\bibitem{clo2} D. Cox, J. Little, D. O'Shea, {\em Ideals, Varieties, and Algorithms: An Introduction to Computational Algebraic Geometry and Commutative Algebra}, Third Edition, Springer, New York, 2010.

\bibitem{dp} M. De Boer, R. Pellikaan, {\em Grobner Bases for Codes}, in Some Tapas of Computer Algebra, Springer, 1999, pp. 237--259.

\bibitem{EiGo} D. Eisenbud, S. Goto, {\em Linear free resolutions and minimal multiplicity}, J. Algebra, 88 (1984), pp. 89--133.

\bibitem{GeHaMi} A. Geramita, B. Harbourne, J. Migliore, {\em Star conﬁgurations in $\mathbb P^n$}, J. Algebra, 376 (2013), pp. 279--299.


\bibitem{GrSt} D. Grayson, M. Stillman, {\em Macaulay2, a software system for research in algebraic geometry},
          Available at http://www.math.uiuc.edu/Macaulay2/.

\bibitem{hp1} W. Huffman, V. Pless, editors,
            {\em Handbook of Coding Theory Volumes 1 \& 2}, Elsevier Science B.V., Netherlands, 1998.

\bibitem{sh} R. Sharp, {\em Steps in Commutative Algebra}, Second Edition, Cambridge University Press, Cambridge, 2000.

\bibitem{t} S. Tohaneanu, {\em On the De Boer-Pellikaan method for computing minimum distance}, J. Symbolic Computation, 45 (2010), pp. 965--974.

\bibitem{t2} S. Tohaneanu, {\em A commutative algebraic approach to the fitting problem}, Proc. Amer. Math. Soc., 142 (2014), pp. 659--666.
						

\end{thebibliography}

\end{document}